\newtheorem{theorem}{Theorem}
\newtheorem{corollary}[theorem]{Corollary}
\newtheorem{lemma}[theorem]{Lemma}
\newtheorem{remark}[theorem]{Remark}
\title{Some Non-Amenable Groups}
\author{Aditi Kar and Graham A. Niblo}
\address{School of Mathematics, University of Southampton, Southampton, SO17 1BJ, UK}
\email{A.Kar@soton.ac.uk, G.A.Niblo@soton.ac.uk}
\thanks{Research partially supported by EPSRC grant  EP/F031947/1.}
\begin{document}
\begin{abstract} We generalise a result of R.~Thomas to establish the non-vanishing of the first $\ell^2$ Betti number for a class of finitely generated groups. \end{abstract}

\maketitle

\noindent In this note we give the following generalisation of a result of Richard Thomas \cite{Thomas}.

\begin{theorem} \label{main} Let $G$ be a finitely generated group given by the presentation \[\langle x_1, \ldots, x_d\ :\ u_1^{m_1}, \ldots,u_r^{m_r}\rangle\] such that each relator  $u_i$ has order $m_i$ in $G$. 
\begin{enumerate}
\item If $G$ is finite then $1 -d+\sum_{i=1}^r \frac{1}{m_i} > 0$ and $|G|\geq \frac{1}{1-d+\sum_{i=1}^r \frac{1}{m_i}}$.
\item If the first $\ell^2$ Betti number $\beta^2_1(G)$ of $G$ is zero, then $$1-d+\sum_{i=1}^r \frac{1}{m_i} \geq 0.$$ 
\end{enumerate}
\end{theorem}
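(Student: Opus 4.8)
The plan is to realise $G$ as the fundamental group of an explicit, finite-type CW complex $Y$ --- necessarily infinite dimensional --- and to read the two inequalities off the $\ell^{2}$-Euler characteristic $\chi^{(2)}(Y)$. First I would take $W=\bigvee_{j=1}^{d}S^{1}_{j}$, a wedge of $d$ circles (the presentation complex of $F_{d}=\langle x_{1},\dots ,x_{d}\rangle$), so that each word $u_{i}$ is represented by a loop $g_{i}\colon S^{1}\to W$. For each $i$ I would fix a model $M_{i}$ of the classifying space $B(\mathbb{Z}/m_{i}\mathbb{Z})$ whose $1$-skeleton is a single circle $c_{i}$ carrying a generator of $\mathbb{Z}/m_{i}\mathbb{Z}$ (for instance an infinite lens space), and let $Y$ be the homotopy pushout (double mapping cylinder) of
\[
W\ \xleftarrow{\ \bigsqcup_{i}g_{i}\ }\ \bigsqcup_{i}c_{i}\ \hookrightarrow\ \bigsqcup_{i}M_{i}.
\]
Attaching the $M_{i}$ one at a time and applying van Kampen's theorem (each attachment adds exactly the relation $u_{i}^{m_{i}}=1$) identifies $\pi_{1}(Y)$ with $G$, and $Y$ may be chosen with finitely many cells in each dimension.

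Next I would compute all $\ell^{2}$-homology with respect to $\pi_{1}(Y)=G$, that is, of the preimages in the universal cover $\widetilde{Y}$, and apply the (weakly exact) $\ell^{2}$-Mayer--Vietoris sequence to $Y=W\cup_{\sqcup c_{i}}(\sqcup M_{i})$. This is where the hypothesis that each $u_{i}$ has order \emph{exactly} $m_{i}$ in $G$ enters: it forces $\pi_{1}(c_{i})=\mathbb{Z}\to G$ to have image $\langle u_{i}\rangle\cong\mathbb{Z}/m_{i}\mathbb{Z}$ and $\pi_{1}(M_{i})=\mathbb{Z}/m_{i}\mathbb{Z}\to G$ to be injective. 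Hence the preimage of $M_{i}$ is induced up from a free contractible $\mathbb{Z}/m_{i}\mathbb{Z}$-complex, so has $\beta^{(2)}_{0}=1/m_{i}$ and $\beta^{(2)}_{k}=0$ for $k\ge 1$; the preimage of $c_{i}$ has reduced $\ell^{2}$-chain complex $\ell^{2}(G)\xrightarrow{\,u_{i}-1\,}\ell^{2}(G)$, with $\beta^{(2)}_{0}=\beta^{(2)}_{1}=1/m_{i}$ since $\dim_{\mathcal{N}(G)}\ker(u_{i}-1)=\operatorname{tr}_{\mathcal{N}(G)}\!\bigl(\tfrac{1}{m_{i}}\sum_{k=0}^{m_{i}-1}u_{i}^{k}\bigr)=1/m_{i}$; and the preimage of $W$ has $\beta^{(2)}_{0}=\beta^{(2)}_{0}(G)$, $\beta^{(2)}_{1}=d-1+\beta^{(2)}_{0}(G)$, $\beta^{(2)}_{k}=0$ for $k\ge 2$. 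Feeding these into Mayer--Vietoris shows $\beta^{(2)}_{k}(Y)=0$ for $k\ge 3$ and $\beta^{(2)}_{2}(Y)\le\sum_{i}1/m_{i}<\infty$, so $\chi^{(2)}(Y)$ is defined, and additivity of $\dim_{\mathcal{N}(G)}$ along the sequence gives
\[
\chi^{(2)}(Y)=\chi^{(2)}(W)+\sum_{i=1}^{r}\chi^{(2)}(M_{i})-\sum_{i=1}^{r}\chi^{(2)}(c_{i})=(1-d)+\sum_{i=1}^{r}\frac{1}{m_{i}}-0.
\]

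To conclude, I would use that $Y$ computes the group in low degrees: $\beta^{(2)}_{0}(Y)=\beta^{(2)}_{0}(G)$ since $Y$ is connected with $\pi_{1}(Y)=G$, and $\beta^{(2)}_{1}(Y)=\beta^{(2)}_{1}(G)$ because a $K(G,1)$ is obtained from $Y$ by attaching cells of dimension $\ge 3$ only, which changes neither $C^{(2)}_{1}(\widetilde{Y})$, $C^{(2)}_{2}(\widetilde{Y})$ nor the boundary maps between them, hence not $H^{(2)}_{1}$. Combining,
\[
1-d+\sum_{i=1}^{r}\frac{1}{m_{i}}=\chi^{(2)}(Y)=\beta^{(2)}_{0}(G)-\beta^{(2)}_{1}(G)+\beta^{(2)}_{2}(Y)\ \ge\ \beta^{(2)}_{0}(G)-\beta^{(2)}_{1}(G).
\]
If $G$ is finite then $\beta^{(2)}_{1}(G)=0$ and $\beta^{(2)}_{0}(G)=1/|G|$, so $1-d+\sum_{i}1/m_{i}\ge 1/|G|>0$, which is part~(1) after rearranging; if $\beta^{(2)}_{1}(G)=0$ then $1-d+\sum_{i}1/m_{i}\ge\beta^{(2)}_{0}(G)\ge 0$, which is part~(2).

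The arithmetic at the end is trivial; the real work, and the main obstacle, is the bookkeeping forced by $Y$ being infinite dimensional. One must verify that $\chi^{(2)}(Y)$ exists and is additive over the decomposition, which reduces to the vanishing of $\beta^{(2)}_{k}(Y)$ for large $k$; this falls out of Mayer--Vietoris only because $W$, the $M_{i}$ and the $c_{i}$ all have vanishing $\ell^{2}$-homology in high degrees. One must also justify that $\beta^{(2)}_{1}(Y)=\beta^{(2)}_{1}(G)$ even though $Y$ is not aspherical, and handle carefully the two induction formulas computing the $\ell^{2}$-Betti numbers of the preimages of the $M_{i}$ and $c_{i}$ inside $\widetilde{Y}$.
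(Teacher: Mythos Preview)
Your argument is correct and reaches the same key identity
\[
1-d+\sum_{i=1}^{r}\frac{1}{m_{i}}=\beta^{(2)}_{0}(G)-\beta^{(2)}_{1}(G)+\beta^{(2)}_{2}(\,\cdot\,),
\]
but the route is genuinely different from the paper's. The paper works with the two--dimensional \emph{orbihedral} presentation complex $\mathcal{P}$: a bouquet of $d$ circles with $r$ discs attached along the $u_{i}$, each disc carrying a cone point of angle $2\pi/m_{i}$. Its orbihedral universal cover $X$ is the Cayley $2$--complex with the $m_{i}$ relator discs over each loop $u_{i}^{m_{i}}$ collapsed to one; $G$ then acts with $2$--cell stabilisers $\langle u_{i}\rangle$ of order exactly $m_{i}$. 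A single citation to L\"uck's formula $\sum(-1)^{i}\beta^{(2)}_{i}=\sum(-1)^{i}\sum_{\sigma}\lvert G_{\sigma}\rvert^{-1}$ then gives the identity directly, and $\beta^{(2)}_{1}(\mathcal{P})=\beta^{(2)}_{1}(G)$ follows because both $X$ and the ordinary universal cover of the presentation $2$--complex are simply connected.

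Your construction is, in effect, the Borel resolution of that orbispace: you replace each cone point by a copy of $B(\mathbb{Z}/m_{i})$, obtaining a free $G$--complex at the cost of infinite dimensionality, and you recover the weights $1/m_{i}$ not from stabilisers but from the trace of the idempotent $\tfrac{1}{m_{i}}\sum u_{i}^{k}$ and from $\chi^{(2)}(B(\mathbb{Z}/m_{i}))=1/m_{i}$. What the paper's approach buys is brevity and a finite complex (no convergence issues, one black--box lemma). What your approach buys is that it avoids complexes of groups entirely and makes the appearance of $1/m_{i}$ completely explicit via the operator $u_{i}-1$ on $\ell^{2}(G)$; it also makes transparent exactly where the hypothesis ``$u_{i}$ has order \emph{exactly} $m_{i}$'' is used, namely in computing that trace and in the injectivity of $\pi_{1}(M_{i})\to G$. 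The bookkeeping you flag (finiteness of $\chi^{(2)}(Y)$, weak exactness of Mayer--Vietoris, and $\beta^{(2)}_{1}(Y)=\beta^{(2)}_{1}(G)$ via attaching only cells of dimension $\ge 3$) is all standard and handled correctly.
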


In particular, the case when all the exponents $m_i$ in the presentation are equal to $1$ yields the well known observation that when the first $\ell^2$ Betti number is zero the deficiency of the presentation $d-r$ must be at most $1$. The vanishing of the first $\ell^2$ Betti number of a group $G$ holds for example if $G$ is finite, if it satisfies Kazhdan's property (T) or if it admits an infinite normal amenable subgroup (in particular if it is infinite amenable). We refer to \cite{Fernos} for other interesting examples. We obtain as a corollary:

\begin{corollary} \label{corollary} Let $G$ be a finitely generated group given by the presentation \[\langle x_1, \ldots, x_d\ :\ u_1^{m_1}, \ldots,u_r^{m_r}\rangle\] such that each relator  $u_i$ has order $m_i$ in $G$. If $d>1+\sum_{i=1}^r \frac{1}{m_i}$, then $G$ is infinite, does not satisfy Kazhdan's property (T) and has no amenable infinite normal subgroups.
\end{corollary}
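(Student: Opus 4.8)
The plan is to read Corollary~\ref{corollary} off from Theorem~\ref{main} together with the vanishing criteria for the first $\ell^2$ Betti number recalled above; no genuinely new argument is needed. Write $\mu := 1 - d + \sum_{i=1}^{r}\frac{1}{m_i}$, so that the hypothesis $d > 1 + \sum_{i=1}^{r}\frac{1}{m_i}$ is precisely the inequality $\mu < 0$.

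First I would rule out finiteness: were $G$ finite, part (1) of Theorem~\ref{main} would give $\mu > 0$, contradicting $\mu < 0$, so $G$ is infinite. Next I would apply part (2) in contrapositive form: were $\beta^2_1(G)$ equal to $0$ we would obtain $\mu \geq 0$, again a contradiction, so $\beta^2_1(G) \neq 0$. Finally, as recalled in the introduction, a group with Kazhdan's property (T) has $\beta^2_1 = 0$, and a group admitting an infinite normal amenable subgroup has $\beta^2_1 = 0$; since $\beta^2_1(G) \neq 0$, neither situation can occur, and the corollary follows.

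The whole content therefore lies in Theorem~\ref{main}, so it is worth saying where the difficulty is. I would work with the $\ell^2$ chain complex $\ell^2 G^r \xrightarrow{\partial_2} \ell^2 G^d \xrightarrow{\partial_1} \ell^2 G$ of the universal cover of the presentation $2$ complex $P$, using three facts. First, since an Eilenberg--MacLane space for $G$ is obtained from $P$ by attaching cells of dimension at least $3$, the first $\ell^2$ Betti numbers agree: $\beta^2_1(\widetilde P) = \beta^2_1(G)$. Second, the cokernel of $\partial_1$ computes $\beta^2_0(G)$, which equals $\frac{1}{|G|}$ when $G$ is finite and $0$ otherwise, whence $\dim_{\mathcal N(G)}\ker\partial_1 = d - 1 + \beta^2_0(G)$. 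Third, the boundary of the $i$ th $2$ cell is given by the Fox derivatives of $u_i^{m_i}$, each of which factors through the norm element $N_i = 1 + \bar u_i + \cdots + \bar u_i^{m_i - 1}$ of the cyclic subgroup $\langle\bar u_i\rangle$ of $G$; because $u_i$ has order \emph{exactly} $m_i$, the element $\frac{1}{m_i}N_i$ is a projection of trace $\frac{1}{m_i}$, so multiplication by $N_i$ on $\ell^2 G$ has range of von Neumann dimension $\frac{1}{m_i}$, and hence $\dim_{\mathcal N(G)}\overline{\operatorname{im}\partial_2} \leq \sum_{i=1}^{r}\frac{1}{m_i}$. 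Combining these, $\beta^2_1(G) = \dim_{\mathcal N(G)}\ker\partial_1 - \dim_{\mathcal N(G)}\overline{\operatorname{im}\partial_2} \geq (d - 1 + \beta^2_0(G)) - \sum_{i=1}^{r}\frac{1}{m_i} = \beta^2_0(G) - \mu$; since a finite group has $\beta^2_1 = 0$, the finite case yields $\mu \geq \frac{1}{|G|}$, equivalently $|G| \geq \frac{1}{\mu}$, while for infinite $G$ the hypothesis $\beta^2_1(G) = 0$ yields $\mu \geq 0$. I expect the crux to be this third fact: getting the module conventions and the choice of von Neumann dimension right, and checking that it is exactly the hypothesis that $u_i$ has order $m_i$ (rather than a smaller order) that forces the bound $\frac{1}{m_i}$ rather than $\frac{1}{k}$ for a proper divisor $k$ of $m_i$.
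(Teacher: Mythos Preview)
Your derivation of the corollary from Theorem~\ref{main} is exactly what the paper intends: the paper does not write out a separate proof, but simply states the corollary immediately after listing the three vanishing criteria for $\beta^2_1$ (finite, property~(T), infinite amenable normal subgroup), so your first paragraph is precisely the implicit argument.

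Your second paragraph, however, sketches a proof of Theorem~\ref{main} along a genuinely different line from the paper. The paper builds the \emph{orbihedral} presentation $2$-complex $\mathcal P$ (discs attached along $u_i$ rather than $u_i^{m_i}$, with cone points of order $m_i$), observes that in its orbihedral universal cover the $2$-cell stabilisers have order exactly $m_i$, and then reads off the identity $\beta^2_0(\mathcal P)-\beta^2_1(\mathcal P)+\beta^2_2(\mathcal P)=1-d+\sum_i\frac{1}{m_i}$ directly from L\"uck's $\ell^2$ Euler--Poincar\'e formula (Lemma~5/equation~(\ref{eq2})); the inequality then follows from $\beta^2_2\ge 0$ and $\beta^2_1(\mathcal P)=\beta^2_1(G)$. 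You instead stay with the ordinary presentation $2$-complex and exploit that the Fox derivatives of $u_i^{m_i}$ factor through the norm element $N_i$, whose associated projection has trace $\frac{1}{m_i}$; this bounds $\dim_{\mathcal N(G)}\overline{\operatorname{im}\partial_2}$ summand by summand. Both routes hinge on the same hypothesis (that $u_i$ has order exactly $m_i$) at the same place: for the paper it pins down the stabiliser order in the orbihedral cover, for you it makes $\frac{1}{m_i}N_i$ an honest projection of the correct trace. Your approach is essentially the Peterson--Thom argument the authors allude to; the paper's orbihedral approach trades the explicit chain-level computation with Fox calculus for a single appeal to the Euler--Poincar\'e formula, which is arguably cleaner but requires setting up the developable complex of groups.
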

 
Thomas established the inequality in (1) above by providing a simple but elegant computation of the dimension of  the  $\mathbb{F}_2$--vector space of 1-cycles of the cellular chain complex of the Cayley graph of $G$ (Thomas refers to this space as the cycle space of $\Gamma$). If $\Gamma$ has $d$ edges and $v$ vertices then the dimension of this vector space is $d-v+1$. An alternative approach, yielding information about the classical first Betti number of $G$ and its finite index subgroups is explored by Allcock in \cite{Allcock}.

We generalise this idea  to give the additional inequality in (2) above by using elementary observations about the $\ell^2$ Betti numbers $\beta^2_i$ of the orbihedral presentation 2-complex of $G$. For an introduction to $\ell^2$ Betti numbers, we refer the reader to \cite{eckmann}. The first $\ell^2$ Betti number vanishes for all finite groups. Cheeger and Gromov have shown that if a group $G$ is amenable then $\beta^2_1(G)=0$ \cite[Theorem 0.2]{cheeger}. More generally, $\beta^2_1(G)$ is zero for any group $G$ which contains an infinite normal amenable subgroup. 

We realised while writing this note that Theorem \ref{main} can be derived from the results of Peterson and Thom,  \cite{thom},  however our method is independent of theirs and provides a short and rather elementary proof of the result. 

\begin{remark} Equation (3) below yields the inequality $\beta^2_1(G)\geq \frac{1}{|G|} + d-1- \sum_i \frac{1}{m_i}$ from \cite{thom}. Here, $|G|$ denotes the size of $G$ and $\frac{1}{|G|}$ is understood to be zero when $G$ is infinite. \end{remark}

\noindent \textbf{Finitely generated but not finitely presented groups: } L\"{u}ck has defined $\ell^2$ Betti numbers for any countable discrete group. The notion agrees with the cellular $\ell^2$ Betti numbers for finitely presented groups and the basic properties including a generalised Euler-Poincar\'e formula for $G$-CW complexes may be found in Chapter 6 of \cite{luck}. Working in this context and arguing as in the proof of Theorem 1, we obtain the following generalisation. 

\begin{theorem}Suppose a group $G$ is given by the presentation  

\[
G=\langle x_1, \ldots, x_d\ :\ u_i^{m_i},\,  i \in I\rangle
\]

where $I$ is a countable set and each relator  $u_i$ has order $m_i$ in $G$. If $\sum_{i \in I} \frac{1}{m_i}$ converges then $\beta^2_1(G) \geq \frac{1}{|G|}+d-1-\sum_{i \in I} \frac{1}{m_i}$. In particular if $\beta^2_1(G)=0$ then $\sum_{i \in I} \frac{1}{m_i} -d+1 \geq 0$. \end{theorem}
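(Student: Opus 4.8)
The plan is to rerun the argument of Theorem~\ref{main} over the countable index set $I$, working throughout with L\"uck's $\ell^2$ Betti numbers for arbitrary, not necessarily cocompact, $G$-CW complexes \cite{luck}. First I would form the orbihedral presentation $2$-complex $X$ of the given presentation: one vertex, one edge for each generator $x_j$ (so the $1$-skeleton is a wedge of $d$ circles), and, for each $i\in I$, a two-dimensional orbifold cell --- a disc carrying a single interior cone point of order $m_i$ --- attached along the loop $u_i$. Then $\pi_1^{\mathrm{orb}}(X)=G$, and the hypothesis that $u_i$ has order exactly $m_i$ in $G$ makes $X$ developable, so its orbihedral universal cover $\widetilde X$ is a proper $G$-CW complex: one free orbit of vertices, $d$ free orbits of edges (the $1$-skeleton of $\widetilde X$ being the Cayley graph of $G$ on $x_1,\dots,x_d$), and, for each $i$, one orbit of $2$-cells with finite cyclic stabiliser $\langle u_i\rangle\cong\mathbb Z/m_i$. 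Consequently the cellular $\ell^2$-chain complex $C^{(2)}_\bullet(\widetilde X)$ over $\mathcal N(G)$ has $\dim_{\mathcal N(G)}C^{(2)}_0=1$, $\dim_{\mathcal N(G)}C^{(2)}_1=d$, $\dim_{\mathcal N(G)}C^{(2)}_2=\sum_{i\in I}\tfrac1{m_i}$, and $C^{(2)}_k=0$ for $k\ge3$.

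The hypothesis enters at exactly one point: because $\sum_{i\in I}\tfrac1{m_i}$ converges, $\dim_{\mathcal N(G)}C^{(2)}_2(\widetilde X)$ is finite, so $\beta^2_0(\widetilde X)$, $\beta^2_1(\widetilde X)$ and $\beta^2_2(\widetilde X)$ are all finite (bounded respectively by $1$, $d$ and $\sum_{i}\tfrac1{m_i}$) and $\beta^2_k(\widetilde X)=0$ for $k\ge3$. Additivity of $\dim_{\mathcal N(G)}$ over short exact sequences then yields the $\ell^2$ Euler--Poincar\'e identity $\sum_k(-1)^k\beta^2_k(\widetilde X)=\sum_k(-1)^k\dim_{\mathcal N(G)}C^{(2)}_k(\widetilde X)$, that is,
\[
\frac1{|G|}-\beta^2_1(\widetilde X)+\beta^2_2(\widetilde X)=1-d+\sum_{i\in I}\frac1{m_i},
\]
where $\beta^2_0(\widetilde X)=\tfrac1{|G|}$ because $\widetilde X$ is connected with proper $G$-action (and $\tfrac1{|G|}=0$ when $G$ is infinite). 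Discarding the nonnegative term $\beta^2_2(\widetilde X)$ gives $\beta^2_1(\widetilde X)\ge\tfrac1{|G|}+d-1-\sum_{i\in I}\tfrac1{m_i}$.

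It then remains to identify $\beta^2_1(\widetilde X)$ with $\beta^2_1(G)$, which I would do exactly as in the proof of Theorem~\ref{main}. Since reduced $\ell^2$-homology $H^{(2)}_1$ depends only on the boundary maps $\partial_1$ and $\partial_2$, it is enough to compare the degree-$\le2$ part of $C^{(2)}_\bullet(\widetilde X)$ with the $\ell^2$-completed start $\bigoplus_{i\in I}\ell^2(G)\xrightarrow{d_2}\ell^2(G)^d\xrightarrow{d_1}\ell^2(G)$ of the free $\mathbb Z G$-resolution attached to the ordinary presentation $\langle x_1,\dots,x_d\mid u_i^{m_i}\rangle$. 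One has $\partial_1=d_1$; and since the Fox derivative satisfies $\partial(u_i^{m_i})/\partial x_j=(1+u_i+\dots+u_i^{m_i-1})\,\partial u_i/\partial x_j$ with $1+u_i+\dots+u_i^{m_i-1}$ the norm element of $\langle u_i\rangle$, the $i$-th column of $d_2$ factors as $\ell^2(G)\twoheadrightarrow\ell^2(G/\langle u_i\rangle)\xrightarrow{\ \partial_2\ }\ell^2(G)^d$ through the (surjective) coset projection. Hence $\overline{\operatorname{im}d_2}=\overline{\operatorname{im}\partial_2}$, the two complexes have the same reduced $H^{(2)}_1$, and --- since a $K(G,1)$ is obtained from the presentation $2$-complex by attaching cells of dimension $\ge3$, which leave $\partial_1$ and $\partial_2$ unchanged --- we get $\beta^2_1(\widetilde X)=\beta^2_1(G)$. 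Combining with the previous step, $\beta^2_1(G)\ge\tfrac1{|G|}+d-1-\sum_{i\in I}\tfrac1{m_i}\ge d-1-\sum_{i\in I}\tfrac1{m_i}$; in particular $\beta^2_1(G)=0$ forces $\sum_{i\in I}\tfrac1{m_i}-d+1\ge0$.

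I expect the main obstacle to be purely the $L^2$-linear-algebra bookkeeping in the non-cocompact setting: one must check that reduced cellular $\ell^2$-homology is the correct invariant for $\widetilde X$, that $\dim_{\mathcal N(G)}$ is additive and satisfies $\dim_{\mathcal N(G)}\ker\partial_k+\dim_{\mathcal N(G)}\overline{\operatorname{im}\partial_k}=\dim_{\mathcal N(G)}C^{(2)}_k(\widetilde X)$ even when the chain modules are infinitely generated, and hence that the displayed Euler--Poincar\'e identity is legitimate --- all of which is supplied by Chapter~6 of \cite{luck}, and all of which genuinely needs $\dim_{\mathcal N(G)}C^{(2)}_2(\widetilde X)<\infty$, i.e.\ convergence of $\sum_{i\in I}\tfrac1{m_i}$. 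By contrast the orbihedral construction, the value $\beta^2_0(\widetilde X)=\tfrac1{|G|}$, and the Fox-calculus comparison giving $\beta^2_1(\widetilde X)=\beta^2_1(G)$ never used finiteness of $I$, so they transfer verbatim from the proof of Theorem~\ref{main}.
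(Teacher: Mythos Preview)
Your proposal is correct and follows essentially the paper's own approach: the paper does not give a separate proof of this theorem but simply says to work in L\"uck's framework for arbitrary $G$-CW complexes and ``argue as in the proof of Theorem~1,'' which is precisely what you do, with the convergence hypothesis entering exactly where you say it does (to make the $\ell^2$ Euler--Poincar\'e identity legitimate). The only minor deviation is that for the identification $\beta^2_1(\widetilde X)=\beta^2_1(G)$ the paper invokes simple connectivity of $\widetilde X$ and of the topological universal cover together with \cite[Theorem~6.54(3)]{luck}, whereas you supply a direct Fox-calculus comparison of the two chain complexes; both arguments are valid and yield the same conclusion.
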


Before we embark on the proof of Theorem \ref{main}, we need a short lemma which says that the orbihedral Euler characteristic of a $G$-CW complex $Y$ may be computed from its $\ell^2$ Betti numbers. The lemma is well known and may be found in \cite{luck}.  

\begin{lemma}\cite[Theorem 6.80]{luck} If $G$ acts on a connected CW complex $\tilde Y$ with finite quotient $Y$ such that stabilisers of cells are finite, then the $\ell^2$-Euler characteristic of $Y$ is equal to the orbihedral Euler characteristic of $Y$. More precisely, if for each $i$, $\Sigma_i$ is a choice of representatives for the orbits of $i$-cells in $\tilde Y$ and the stabiliser of a cell $\sigma$ in $G$ is written $G_\sigma$, then 
\begin{equation}\label{eq2}
\sum_i (-1)^i \beta^2_i(Y) = \sum_i (-1)^i \sum_{\sigma \in \Sigma_i} \frac{1}{|G_\sigma|}
\end{equation}
 \end{lemma}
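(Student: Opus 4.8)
The plan is to derive the identity \eqref{eq2} from two ingredients: the $\ell^2$ Euler--Poincar\'e formula, which expresses the $\ell^2$-Euler characteristic through the equivariant cellular chain complex of $\tilde Y$, and a direct computation of the von Neumann dimension of each chain module appearing in that complex.

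First I would set up the $\ell^2$ chain complex. Writing $C_*(\tilde Y)$ for the cellular $\mathbb{Z}G$-chain complex of $\tilde Y$, put $C_*^{(2)}(\tilde Y)=\ell^2(G)\otimes_{\mathbb{Z}G}C_*(\tilde Y)$, a chain complex of Hilbert $\mathcal{N}(G)$-modules, where $\mathcal{N}(G)$ is the group von Neumann algebra; by definition $\beta_i^2(Y)=\dim_{\mathcal{N}(G)}H_i^{(2)}(\tilde Y)$, where $H_i^{(2)}$ is reduced $\ell^2$-homology and $\dim_{\mathcal{N}(G)}$ is L\"uck's von Neumann dimension. Since $Y$ is finite, each $C_i^{(2)}(\tilde Y)$ is a finitely generated Hilbert module and all dimensions in sight are finite. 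The $\ell^2$ Euler--Poincar\'e formula then gives
\[ \sum_i (-1)^i\beta_i^2(Y)=\sum_i (-1)^i\dim_{\mathcal{N}(G)}C_i^{(2)}(\tilde Y); \]
the standard argument uses only additivity of $\dim_{\mathcal{N}(G)}$ on short exact sequences together with the fact that it is unchanged on passing from the image of a bounded operator to its closure, applied to the sequences $0\to\overline{\operatorname{im}\partial_{i+1}}\to\ker\partial_i\to H_i^{(2)}(\tilde Y)\to 0$ and $0\to\ker\partial_i\to C_i^{(2)}(\tilde Y)\to\overline{\operatorname{im}\partial_i}\to 0$, after which the terms telescope.

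Next I would analyse the chain modules orbit by orbit. Because the cell stabilisers are finite and $Y$ is finite, a choice of orbit representatives $\Sigma_i$ yields an isomorphism of $\mathbb{Z}G$-modules $C_i(\tilde Y)\cong\bigoplus_{\sigma\in\Sigma_i}\mathbb{Z}[G/G_\sigma]$, and hence, using $\ell^2(G)\otimes_{\mathbb{Z}G}\mathbb{Z}[G/G_\sigma]\cong\ell^2(G/G_\sigma)$, an isomorphism $C_i^{(2)}(\tilde Y)\cong\bigoplus_{\sigma\in\Sigma_i}\ell^2(G/G_\sigma)$. It then suffices to show $\dim_{\mathcal{N}(G)}\ell^2(G/G_\sigma)=1/|G_\sigma|$: the element $p_\sigma=\frac{1}{|G_\sigma|}\sum_{g\in G_\sigma}g$ is a projection in $\mathbb{C}[G_\sigma]\subseteq\mathcal{N}(G)$ with $\ell^2(G)p_\sigma\cong\ell^2(G/G_\sigma)$, and the canonical trace of $p_\sigma$ is the coefficient of the identity element, namely $1/|G_\sigma|$; by definition this trace is $\dim_{\mathcal{N}(G)}\ell^2(G/G_\sigma)$. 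Substituting into the Euler--Poincar\'e identity produces exactly \eqref{eq2}.

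The main obstacle is the first ingredient: justifying that the alternating sum of $\ell^2$ Betti numbers coincides with the alternating sum of the von Neumann dimensions of the chain modules. In the Hilbert-module setting one cannot simply transcribe the classical rank--nullity bookkeeping, since images of boundary maps need not be closed; what makes the argument go through are the formal properties of L\"uck's dimension function — additivity on exact sequences and invariance under closure — which are precisely the content of the portion of \cite{luck} being cited. Granting those, the remaining steps are elementary.
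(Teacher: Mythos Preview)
Your proposal is correct and follows essentially the same route as the paper's proof: invoke the $\ell^2$ Euler--Poincar\'e formula $\sum_i(-1)^i\beta^2_i(Y)=\sum_i(-1)^i\dim_G C_i(\tilde Y)$, decompose each chain module as $\bigoplus_{\sigma\in\Sigma_i}\ell^2(G/G_\sigma)$, and compute $\dim_G\ell^2(G/G_\sigma)=1/|G_\sigma|$. The only cosmetic difference is in this last step --- the paper phrases it via induction of Hilbert modules, $\dim_G\mathrm{Ind}^G_{G_\sigma}\mathbb{R}=\dim_{G_\sigma}\mathbb{R}=1/|G_\sigma|$, whereas you compute the trace of the averaging projection $p_\sigma$ directly --- and you additionally sketch the telescoping argument behind the Euler--Poincar\'e formula that the paper simply cites from Eckmann.
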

 
\noindent We now proceed with the proof of Theorem \ref{main}.

\begin{proof}[Proof of Theorem \ref{main} ] Let $G$ be a group given by the presentation $\langle x_1,\ldots, x_d\ :\ u_1^{m_1},\ldots,u_r^{m_r}\rangle$ where each relator $u_i$ has order $m_i$ in $G$. The orbihedral presentation 2-complex of $G$, which we will denote by $\mathcal{P}$, has one vertex and $d$ edges forming a bouquet of $d$ circles. Identifying each of the circles with one of the generators $x_i$ we identify the fundamental group of this bouquet  with the free group on $\{x_1, \ldots, x_d\}$. Attached to this are $r$ discs, $\mathcal{D}_1,\ldots, \mathcal{D}_r$. For each $i=1,\ldots, r$, the disc $\mathcal D_i$ is endowed with a cone point of cone angle $\frac{2 \pi}{m_i}$ and its boundary is attached by a degree 1 map along the loop in the bouquet of circles corresponding to the element $u_i$. 

Attaching the corresponding stabilisers to cells we obtain, in the language of Haefliger \cite{haefliger}, a  developable complex of groups, meaning that the orbihedral universal cover $X$ of $\mathcal P$ exists. In fact, $X$ has a simple description in terms of the Cayley graph $\mathcal C$ of $G$. The  $1$-skeleton of the orbihedral universal cover is the Cayley graph of $G$ with respect to the generating set $\{x_1, \ldots, x_d\}$, while the $2$-skeleton is obtained from the 2-skeleton of the topological universal cover of  the presentation $2$-complex by collapsing stacks of relator discs having common boundaries. Specifically, the relator $u_i^{m_i}$ corresponds to a loop $\gamma_i$ in $\mathcal P$ bounding a disc and there is a unique lift $\tilde{\gamma_i}$ of $\gamma_i$ based at the identity vertex in $\mathcal C$. In the topological universal cover of the presentation 2-complex there are additional copies of this disc (glued along the same loop) based at the elements $u_i, \ldots u_i^{m_i-1}$ and the action of the subgroup $\langle u_i\rangle$ permutes these discs so that each  has trivial stabiliser. In contrast, these copies are identified in the orbihedral cover to give a single disc and it is preserved by the element $u_i$. The hypothesis that $u_i$ has order $m_i$ controls the order of the cell stabiliser. 

We now apply  the identity in (\ref{eq2}) to our complex $X$. The action of $G$ on the vertices and the edges of $X$ is both free and transitive. On the other hand, by hypothesis, the stabiliser of a lift of a $2$-cell $\mathcal D_i$ has order $m_i$. Hence, $\beta^2_0(\mathcal P) - \beta^2_1(\mathcal P) + \beta^2_2(\mathcal P) = 1 - d + \sum_i \frac{1}{m_i}$. We also know that $\beta^2_0(\mathcal P)= \frac{1}{|G|}$ where $\frac{1}{|G|}$ is understood to be zero when $G$ is infinite. Therefore, 

\begin{equation}\label{eq3}
\frac{1}{|G|} - \beta^2_1(\mathcal P) + \beta^2_2(\mathcal P) = 1 - d + \sum_i \frac{1}{m_i}.
\end{equation}

Finally we remark that the first $\ell^2$ Betti number of the group $G$ may be computed as the first $\ell^2$ Betti number of the orbihedral presentation complex used above. By definition, $\beta^2_1(G)$ is the von Neumann dimension of the first $\ell^2$ homology group of $Y$ with coefficients in the von-Neumann algebra of $G$, where $Y$ is the universal cover of the (topological) presentation 2 complex for $G$. Since both $X$ and $Y$ are simply connected we deduce from Theorem 6.54(3) of \cite{luck} that $\beta^2_1(G)= \beta^2_1(\mathcal P)$. Therefore, equation (\ref{eq3}) becomes 
\begin{equation}\label{eq4}
\frac{1}{|G|} - \beta^2_1(G) + \beta^2_2(\mathcal P) = 1 - d + \sum_i \frac{1}{m_i}.
\end{equation}

Now assume that $\beta^2_1(G)=0$. 
Since $\beta^2_2(\mathcal P) \geq 0$ , we get the identity we are looking for, namely \[1-d+\sum_{i=1}^r {\frac{1}{m_i}}\geq \frac{1}{|G|}.\] 
In particular, if $G$ is finite, then the $\ell^2$ cohomology of $G$ is just the group cohomology with real coefficients, and this vanishes so we obtain Thomas's result that $1 -d+\sum_{i=1}^r \frac{1}{m_i} > 0$ and $|G|\geq \frac{1}{1-d+\sum_{i=1}^r \frac{1}{m_i}}$. On the other hand, if $G$ is infinite and its first $\ell^2$ Betti number is zero, in particular if $G$ is an infinite amenable group, then we obtain the inequality $1-d+\sum_{i=1}^r {\frac{1}{m_i}}\geq 0$, as required. \end{proof}

\end{document}